\numberwithin{equation}{section}
\numberwithin{figure}{section}
\theoremstyle{plain}
\newtheorem{thm}{Theorem}[section]
\newtheorem{remark}[thm]{Remark}
\newtheorem{lemma}[thm]{Lemma}
\DeclarePairedDelimiter{\parens}{(}{)}
\DeclarePairedDelimiter{\set}{\{}{\}}
\DeclarePairedDelimiter\size{\lvert}{\rvert}   
\newcommand{\Han}{H{\`a}n}
\newcommand{\Rodl}{R\"{o}dl}
\newcommand{\Szabo}{Szab\'o}
\newcommand{\Dudek}{Dudek}
\newcommand{\FGLPS}{Fox, Grinshpun, Liebenau, Person, and \Szabo{}}
\renewcommand{\leq}{\leqslant}
\renewcommand{\geq}{\geqslant}
\renewcommand\le{\leqslant}
\renewcommand\ge{\geqslant}
\newcommand{\calP}{\ensuremath{\mathcal{P}}}
\newcommand{\calL}{\ensuremath{\mathcal{L}}}
\newcommand{\calI}{\ensuremath{\mathcal{I}}}
\title[New upper bound on the minimum degree of minimal Ramsey graphs]{A new upper bound on the minimum degree of minimal Ramsey graphs}
\author{Anurag Bishnoi$^1$}
\address[Bishnoi]{$^1$ Delft Institute of Applied Mathematics, TU Delft, Netherlands.}
\email[Bishnoi]{A.Bishnoi@tudelft.nl}
\thanks{Anurag Bishnoi's research supported by a Discovery Early Career Award of the Australian Research Council (No.~DE190100666)}
\author{Thomas Lesgourgues$^2$}
\address[Lesgourgues]{$^2$ University of Waterloo, Canada.}
\email[Lesgourgues]{tlesgourgues@uwaterloo.ca}
\thanks{Thomas Lesgourgues' research supported by an Australian Government Research Training Program Scholarship and the School of Mathematics and Statistics, UNSW}
\subjclass[2010]{05C55,05D10,51E12}
\keywords{Ramsey graphs, generalised quadrangles}
\thanks{}
\begin{document}

\begin{abstract}
    We prove that $s_r(K_{k+1}) = O(k^3 r^3 \ln^3 k)$, where $s_r(K_k)$ is the Ramsey parameter introduced by Burr, Erd\H{o}s and Lov\'{a}sz 
    in 1976, which is defined as the smallest minimum degree of a graph $G$ such that any $r$-colouring of the edges of $G$ contains a monochromatic $K_k$, whereas no proper subgraph of $G$ has this property. 
\end{abstract}

\maketitle 
\section{Introduction}
A graph $G$ is called $r$-Ramsey for another graph $H$, denoted by $G \rightarrow (H)_r$, if every $r$-colouring of the edges of $G$ contains a monochromatic copy of $H$. 
Observe that if $G \rightarrow (H)_r$, then every graph containing $G$ as a subgraph is also $r$-Ramsey for $H$. 
Some very interesting questions arise when we study graphs $G$ which are minimal with respect to $G \rightarrow (H)_r$, that is, $G \rightarrow (H)_r$ but there is no proper subgraph $G'$ of $G$ such that $G' \rightarrow (H)_r$. 
We call such graphs \textit{$r$-Ramsey minimal for $H$} and we denote the set of all $r$-Ramsey minimal graphs for $H$ by $\mathcal{M}_r(H)$. It follows from the classical result of Ramsey \cite{Ramsey:1929aa} that $\mathcal{M}_r(H)$ is non-empty for any choices of graph $H$ and positive integer $r$. 

Many questions on $\mathcal{M}_r(H)$ have been explored; for example, the Ramsey number $R_r(H)$ denotes the smallest number of vertices of any graph in $\mathcal{M}_r(H)$ and the size Ramsey number $\hat{R}_r(H)$ denotes the smallest number of edges. We refer the reader to \cite{Burr81, Burr85, Luczak94, Rodl08} for various results on Ramsey minimal problems. In this paper, we will be interested in the \textit{smallest minimum degree of an $r$-Ramsey minimal graph}, defined by 
\[s_r(H) \coloneqq \min_{G \in \mathcal{M}_r(H)} \delta(G),\]
for a finite graph $H$ and positive integer $r$, where $\delta(G)$ denotes the minimum degree of $G$. Trivially, we have $s_r(H) \leq R_r(H) - 1$, since the complete graph on $R_r(H)$ vertices is $r$-Ramsey for $H$ and is $(R_r(H) - 1)$-regular (taking minimal Ramsey subgraphs of this graph cannot increase the minimum degree). This parameter was introduced by Burr, Erd\H{o}s and Lov\'{a}sz \cite{Burr:1976aa} in 1976. They were able to show the rather surprising exact result, $s_2(K_{k+1}) = k^2$, where $K_{k+1}$ is the complete graph on $k+1$ vertices, which is far away from the trivial exponential bound of $s_2(K_{k+1}) \leq R_2(k+1) - 1$.

While no precise values are known for $s_r(K_{k+1})$ for $r>2$,~\FGLPS{}~\cite{Fox:2016aa} showed that $s_r(K_{k+1})$ is quadratic in $r$, up to a polylogarithmic factor, when the size of the clique is fixed. Formally, they showed that  for all $k \geq 2$ there exist constants $c_k, C_k > 0$ such that for all $r \geq 3$, we have
\begin{equation}
    c_k r^2 \frac{\ln r}{ \ln{\ln r}} \le s_r(K_{k+1}) \le C_k r^2 (\ln r)^{8k^2}.\label{eq:FGLPS}
\end{equation}

When $k=2$, Guo and Warnke~\cite{Guo-Warnke20} settled the exact polylogarithmic factor, following earlier work in~\cite{Fox:2016aa}. In the other regime, when the number of colours is fixed, \Han{}, \Rodl{}, and  \Szabo{}~\cite{Han:2018aa} showed that $s_r(K_{k+1})$ is quadratic in the clique size $k$, up to a polylogarithmic factor. They proved that, for every integer $r \geq 2$ there exists a constant $C'_r$ such that for every integer $k \geq 3$
\begin{equation}
     s_r(K_{k+1})\leq C'_r(k\ln k)^2.\label{eq:HRS_asymp}
\end{equation}

The constant in the upper bound of \eqref{eq:FGLPS} is rather large ($C_k\sim k^2 2^{8k^2}$), and in particular not polynomial in $k$. To remedy this, \FGLPS{}~\cite{Fox:2016aa} also proved an upper bound which is polynomial in both $k$ and $r$ and is applicable for small values of $r$ and $k$.

\begin{thm}[Fox, Grinshpun, Liebenau, Person, Szab\'{o}]
\label{thm:FGLPS}
For all $k\ge 2$, $r \ge 3$, $s_r(K_{k+1}) \le 8k^6 r^3$. 
\end{thm}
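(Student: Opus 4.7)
The plan is to exhibit, for each $k \geq 2$ and $r \geq 3$, a graph $G$ with $G \to (K_{k+1})_r$ containing a designated vertex $v$ of degree at most $8k^6 r^3$, and then pass to a minimal Ramsey subgraph $G' \subseteq G$ that preserves $v$ together with all its incident edges. Since the target bound is polynomial in both $k$ and $r$, the construction cannot rely on any quantity exponential in these parameters; in particular, $R_r(K_{k+1})$ itself cannot appear as a size.

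The main tool would be the \emph{signal sender} technique of Burr, \Erdos{} and \Lovasz{}: a small graph $S$ equipped with two distinguished edges $e_1, e_2$ such that $S \not\to (K_{k+1})_r$ but in every $r$-colouring of $S$ avoiding a monochromatic $K_{k+1}$, the edges $e_1$ and $e_2$ are forced to receive the same colour. I would either construct such a sender directly or invoke an existing one, whose size and degree at the distinguished edges are polynomial in $k$ and $r$; the exponents in this polynomial will dictate the exponents in the final bound.

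With the signal sender in hand, I would build $G$ around $v$ as follows: give $v$ a small ``anchor'' neighbourhood $N(v)=\{u_1,\dots,u_d\}$ of polynomial size in $k$ and $r$; attach copies of $S$ between pairs of edges $vu_i,vu_j$ so that in any valid $r$-colouring of $G$ all edges at $v$ must receive a common colour; and attach a ``core'' subgraph on $N(v)$ strong enough that, once this common colour $c$ is fixed, a $c$-coloured $K_k$ inevitably appears among the $u_i$'s, yielding a monochromatic $K_{k+1}$ through $v$. This will give $G\to(K_{k+1})_r$ with $\deg_G(v)=d$.

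Finally, I would verify that $v$ survives in a minimal Ramsey subgraph by showing that each edge at $v$ is essential: removing $vu_i$ should allow one to build a $K_{k+1}$-free $r$-colouring of $G\setminus vu_i$ by exploiting the non-Ramsey property of $S$ at the distinguished edge corresponding to $vu_i$. Hence $\delta(G') \leq \deg_{G'}(v) = \deg_G(v)$, and a direct count multiplying $d$ by the per-anchor contribution from the attached senders and the core yields the bound $8k^6 r^3$. The main obstacle is the signal sender itself: producing one with polynomial size, with controlled degree at the distinguished edges, and verifying it is not Ramsey while still enforcing colour equality, is delicate, and this is exactly where the cubic $r^3$ and sixth-power $k^6$ arise. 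Improving these exponents is precisely what the present paper achieves via generalised quadrangles.
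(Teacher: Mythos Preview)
Note first that the present paper does not prove this theorem; it is quoted from \FGLPS{}, whose method (as the paper describes) proceeds via an exact reformulation of $s_r(K_{k+1})$ as the \emph{$r$-colour $k$-clique packing number}, followed by an explicit finite-geometric construction---the moment-curve incidence structures in $\mathbb{F}_q^3$ that reappear here as Lemma~\ref{lemma:exist_PLS}---realising the required packing on $q^3$ points. No distinguished low-degree vertex and no gadget is used; the bound is simply the size of the point set.

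Your route is different and has a genuine gap at the ``core'' step. The senders synchronise the edges at $v$ to a common colour $c$, but they impose nothing on the colouring \emph{inside} $N(v)$; and since the adversary is free to make $c$ any colour, what you actually need is a graph $H$ on $d$ vertices such that every $K_{k+1}$-free $r$-colouring of $H$ contains a copy of $K_k$ in \emph{every} colour class. Producing such an $H$ of polynomial size is not a side detail---it is precisely the content of the FGLPS packing bound, and once you have it their reformulation gives $s_r(K_{k+1}) \le d$ directly, with no senders needed. The sender scaffolding is therefore doing no real work; the entire difficulty is hidden in the unconstructed core, and your attribution of the exponents $k^6 r^3$ to the sender construction is mistaken. (A smaller issue: essentiality of each $vu_i$ in $G$ does not by itself force a minimal Ramsey subgraph to retain $v$; you would also need $G - v \not\to (K_{k+1})_r$, which in turn requires the core itself not to be $r$-Ramsey for $K_{k+1}$.)
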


\Han{}, \Rodl{}, and  \Szabo{}~\cite{Han:2018aa} further proved that the constant $C'_r$ of~\eqref{eq:HRS_asymp} is polynomial in $r$ when $r<k^2$ and $k$ is large enough. They showed that there exists a constant $k_0$ such that for every $k > k_0$ and $r < k^2$, we have $s_r(K_{k+1}) \leq 80^3 (r \ln r)^3 (k \ln k)^2$. Combining with~\eqref{eq:FGLPS}, this result implies the existence of a large absolute constant $C$ and a polynomial upper bound for $s_r(K_{k+1})$.

\begin{thm}[H\`{a}n, R\"{o}dl, Szab\'{o}]
\label{thm:HRS}
There exists an absolute constant $C$ such that for every $k\ge 2$ and $r < k^2$,
\[s_r(K_{k+1}) \leq C (r \ln r)^3 (k \ln k)^2.\]
\end{thm}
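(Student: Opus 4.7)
The plan is to derive the bound by combining two upper bounds already recorded in the excerpt: the refined asymptotic estimate of \Han{}, \Rodl{}, and \Szabo{}, which is valid only for sufficiently large $k$, together with Theorem~\ref{thm:FGLPS}, which is polynomial in both $k$ and $r$. No new combinatorial construction is required; the argument is essentially a case split on the size of $k$ coupled with a careful choice of the universal constant.

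First I would fix $k_0$ to be the threshold supplied by \Han{}, \Rodl{}, and \Szabo{}, so that
\[
s_r(K_{k+1}) \leq 80^3 (r \ln r)^3 (k \ln k)^2
\]
holds for every $k > k_0$ and every $r < k^2$. This already settles the large-$k$ regime as soon as we take $C \geq 80^3$.

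It remains to handle $2 \leq k \leq k_0$. Since $k$ is now bounded by an absolute constant, Theorem~\ref{thm:FGLPS} gives $s_r(K_{k+1}) \leq 8 k^6 r^3 \leq 8 k_0^6 r^3$ for every $r \geq 3$, while the Burr--\Erdos{}--\Lovasz{} identity $s_2(K_{k+1}) = k^2$ handles the boundary case $r = 2$. To compare with the target $C (r \ln r)^3 (k \ln k)^2$, I would use the trivial inequalities $r \ln r \geq r \ln 2$ and $k \ln k \geq 2 \ln 2$, valid for $k, r \geq 2$, so that the target is at least $4 C (\ln 2)^5 r^3$. Any $C \geq 2 k_0^6 / (\ln 2)^5$ therefore absorbs the FGLPS estimate in this range, and the $r = 2$ case is dominated trivially. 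Taking $C$ to be the maximum of $80^3$ and this small-$k$ threshold produces a single absolute constant that works in all cases.

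The only delicate point is to verify that $k_0$ really is a universal constant, not something that drifts with $k$ or $r$; this is built into the way \Han{}, \Rodl{}, and \Szabo{} formulate their bound, so $k_0^6$ contributes only a numerical factor to $C$. The deep Ramsey-theoretic content lives in the two cited theorems, and I expect the remaining step to be purely a matter of bookkeeping constants rather than a real obstacle.
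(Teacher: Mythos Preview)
Your proposal is correct and mirrors the paper's own one-line justification: the paper simply remarks that the $80^{3}$ bound of \Han{}, \Rodl{}, and \Szabo{} for $k > k_0$, combined with the FGLPS estimate for the finitely many remaining pairs $(k,r)$ with $k \le k_0$ and $r < k^{2} \le k_0^{2}$, yields an absolute constant $C$. The only cosmetic difference is that the paper points to \eqref{eq:FGLPS} rather than Theorem~\ref{thm:FGLPS} for the small-$k$ range, which is immaterial since both $k$ and $r$ are bounded there.
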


Finally, using a group theoretic model of generalised quadrangles introduced by Kantor in 1980~\cite{Kantor:1986aa}, Bamberg and the authors~\cite{bambergMinimumDegreeMinimal} proved another polynomial bound, reducing the dependency in $r$, and improving on Theorem~\ref{thm:FGLPS} for any $k,r$ and on Theorem~\ref{thm:HRS} when $r>k^6$.

\begin{thm}[Bamberg, Bishnoi, Lesgourgues]\label{thm:BBL}
There exists an absolute constant $C$ such that for all $k\ge 2$, $r \ge 3$,  $s_r(K_{k+1}) \leq C k^5 r^{5/2}$. 
\end{thm}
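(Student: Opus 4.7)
My plan is to prove Theorem \ref{thm:BBL} by using the collinearity graph of Kantor's generalised quadrangle as a ``host graph'' in a Ramsey construction, and then trimming it (with the aid of small attached gadgets) to a Ramsey-minimal subgraph of small minimum degree. Recall that for every prime power $q$, Kantor's 1980 construction produces a generalised quadrangle $\mathcal{Q}(q)$ of order $(q^2,q)$ arising as a coset geometry of a certain $q$-group. Its collinearity graph $H_q$ is $q^2(q+1)$-regular; its maximal cliques are exactly the lines of $\mathcal{Q}(q)$, each of size $q^2+1$; any two non-collinear points share exactly $q+1$ common neighbours; and the elation group acts regularly on points (in particular flag-transitively), endowing $H_q$ with strong symmetries. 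I would choose $q$ to be the smallest prime power of order $k^{5/3}r^{5/6}$ (Bertrand's postulate yields such a $q$ up to a constant factor), so that $\delta(H_q)=q^2(q+1)=O(k^5 r^{5/2})$, matching the target bound up to the absolute constant.

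The first main step is a gadget reduction in the spirit of \FGLPS{}: I would prove (or invoke) a lemma of the form ``if $H$ is a $K_{k+1}$-free graph with minimum degree $d$ such that every $r$-colouring of $E(H)$ contains a monochromatic $K_k$ sitting inside a suitable common-neighbourhood structure, then $s_r(K_{k+1}) \leq C' d$ for an absolute constant $C'$''. The reduction attaches small gadgets (whose vertices end up with degree at least $d$ in the combined graph) onto each such $K_k$, extending it to a monochromatic $K_{k+1}$ without creating new ones elsewhere; a standard minimality argument then trims the whole graph to a Ramsey-minimal subgraph of the same minimum degree, up to constant factors.

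The main technical step, and the place where I expect the hardest work, is to prove that $H_q$ itself satisfies this Ramsey-for-$K_k$ property at the chosen value of $q$. The plan is a double-counting argument exploiting the GQ axioms. Each line of $\mathcal{Q}(q)$ induces a $K_{q^2+1}$ in $H_q$, and in any $r$-colouring some colour class captures at least a $1/r$-fraction of edges of each line; together with the transitive elation-group action, this provides a uniform distribution of monochromatic edges across $H_q$. To assemble these into a monochromatic $K_k$ of the required size, I would exploit that non-collinear pairs have exactly $q+1$ common neighbours, giving a quasi-random expansion property that lets one glue monochromatic fragments from different lines along collinearity chains. The delicate quantitative balance---trading off the $(q^2+1)$-clique size coming from lines against the $(q+1)$-sized common neighbourhoods coming from antiflags, together with a union bound over colourings---is precisely what produces the exponents $5$ and $5/2$ in the final bound; a naive application of Ramsey's theorem inside a single line $K_{q^2+1}$ would yield only an exponential dependence on $k$ in the required $q$, and so would not suffice. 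Calibrating this balance so that the Ramsey property for $K_k$ holds exactly at $q = \Theta(k^{5/3}r^{5/6})$ will be the crux of the argument.
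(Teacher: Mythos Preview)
This theorem is not proved in the present paper; it is quoted as a prior result from \cite{bambergMinimumDegreeMinimal}. From the surrounding discussion one sees that the actual argument there goes through Lemma~\ref{lemma:Bound_s}: Kantor's group-theoretic model is used to pack $r$ generalised quadrangles of order $(q^2,q)$ on a common point set $\calP$ with pairwise disjoint line sets, and then Lemma~\ref{lemma:Bound_s} (which encodes the $r$-colour $k$-clique packing-number reduction) gives $s_r(K_{k+1})\le|\calP|=(q^2+1)(q^3+1)$. Choosing $q$ of order $k\sqrt{r}$, so that $q^2\ge 3rk\ln k$ and $q\ge 3k(1+\ln r)$, yields $|\calP|=O(k^5r^{5/2})$.

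Your proposal takes a different route and contains a genuine gap. Your gadget lemma requires the host graph $H$ to be $K_{k+1}$-free, but the collinearity graph $H_q$ of a generalised quadrangle of order $(q^2,q)$ has every line inducing a clique $K_{q^2+1}$; since you take $q=\Theta(k^{5/3}r^{5/6})\gg k$, the graph $H_q$ contains cliques far larger than $K_{k+1}$, so the hypothesis of your own lemma is violated from the outset, and any attached gadget vertex would already sit inside many copies of $K_{k+1}$ coming from lines of the quadrangle. More broadly, the known reduction bounds $s_r(K_{k+1})$ by the number of \emph{points} of a packed family of triangle-free partial linear spaces, not by the \emph{degree} of a single collinearity graph; your attempt to extract the exponents from $\delta(H_q)\approx q^3$ rather than from $|\calP|\approx q^5$ does not fit that framework, and the informal quasi-random/double-counting sketch you give for producing monochromatic $K_k$'s does nothing to restore the missing $K_{k+1}$-freeness.
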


These theorems all use the equivalence between $s_r(K_k)$ and another extremal function, called the \textit{$r$-colour $k$-clique packing number} \cite{Fox:2016aa}. Theorems~\ref{thm:FGLPS} and~\ref{thm:BBL} further use some `triangle-free' point-line geometries, for which, under certain conditions on their parameters, any packing of these geometries implies an upper bound on the $r$-colour $k$-clique packing number. This argumentation, initially developed by \Dudek{} and \Rodl{} \cite{Dudek:2011aa} and then by Fox et al. in~\cite{Fox:2016aa}, has been further optimized by Bamberg et al. in~\cite[Lemma 3.1]{bambergMinimumDegreeMinimal}. Using this optimized argumentation from~\cite{bambergMinimumDegreeMinimal} and the finite geometric construction of Fox et al. from~\cite{Fox:2016aa}, we show the following general upper bound that improves on the best known bounds for $k\geq 8$ and $r$ in the range $k^2\leq r\leq O(k^4/\ln^6k)$.

\begin{thm}\label{thm:main}
For all $k\ge 2$, $r \ge 3$,  $s_r(K_{k+1}) \leq (8kr\ln k)^{3}$. 
\end{thm}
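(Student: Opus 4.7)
The approach combines the two strongest known ingredients in the area: the sharpened geometry-to-Ramsey reduction of Bamberg, Bishnoi, and Lesgourgues (Lemma 3.1 of~\cite{bambergMinimumDegreeMinimal}) and the triangle-free partial linear space of \FGLPS{} from~\cite{Fox:2016aa}. Theorem~\ref{thm:FGLPS} was obtained by pairing a weaker reduction with the FGLPS geometry, while Theorem~\ref{thm:BBL} paired the sharper reduction with a different (Kantor-type) geometry. The new bound arises from pairing the sharper reduction with the FGLPS geometry for the first time, which is precisely where one would expect a common improvement: the sharper reduction lets us tolerate a smaller instance of the FGLPS construction than~\cite{Fox:2016aa} required.

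First, I would recall the general framework underlying all of Theorems~\ref{thm:FGLPS},~\ref{thm:HRS}, and~\ref{thm:BBL}. A triangle-free partial linear space $\mathcal S$ on $n$ points whose lines have size $k$, and whose lines can be partitioned into $r$ classes so that every point lies on at least one line from every class, gives rise to a natural $r$-edge-coloured graph on $n$ vertices (blow up each line to a clique coloured by its class) in which every vertex belongs to a monochromatic $K_k$ in every colour. Such a graph certifies an upper bound on the $r$-colour $k$-clique packing number, which is equivalent, up to constants, to $s_r(K_{k+1})$. Lemma 3.1 of~\cite{bambergMinimumDegreeMinimal} is the sharpest known quantitative version of this implication and depends only on the point-line incidence parameters and on triangle-freeness, so in particular it applies to the FGLPS geometry. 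I would then feed in the FGLPS construction itself: for a prime power $q$ of sufficient size in terms of $k$ and $r$, it yields such an $\mathcal S$ with $\Theta(q^3)$ points. Choosing $q$ to be the smallest prime power satisfying the hypotheses of Lemma 3.1 (which, by Bertrand's postulate, can be taken of size at most a constant times $kr\ln k$) and tracking the explicit constants should give $s_r(K_{k+1}) \le (8kr\ln k)^3$.

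The main obstacle is the explicit constant calculation. Lemma 3.1 of~\cite{bambergMinimumDegreeMinimal} was originally stated with the Kantor generalised-quadrangle construction in mind, and although its hypotheses are purely combinatorial, they must be verified by hand for the FGLPS geometry with the chosen value of $q$. Then the final constant $8$ must survive two sources of inflation: the gap between the real-valued optimum and the nearest prime power (which is what forces the $\ln k$ factor rather than a clean polynomial in $kr$), and the intrinsic constants inside Lemma 3.1. A secondary concern is checking that the improvement over Theorems~\ref{thm:FGLPS} and~\ref{thm:BBL} really does hold in the advertised range $k^2 \le r \le O(k^4/\ln^6 k)$, which amounts to the straightforward inequality $(kr\ln k)^3 \le Ck^5 r^{5/2}$ in that range.
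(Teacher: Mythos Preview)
Your proposal is correct and follows essentially the same approach as the paper: apply Lemma~3.1 of~\cite{bambergMinimumDegreeMinimal} (stated here as Lemma~\ref{lemma:Bound_s}) to the FGLPS moment-curve construction (restated here as Lemma~\ref{lemma:exist_PLS}), taking $q$ to be the least prime at least $4kr\ln k$ so that Bertrand gives $q\le 8kr\ln k$ and hence $|\calP|=q^3\le(8kr\ln k)^3$. Your informal description of the framework (lines of size $k$, etc.) is slightly off from the actual hypotheses of Lemma~\ref{lemma:Bound_s}, but since you correctly plan to invoke that lemma directly and verify its numerical conditions on $(s,t)=(q-1,q-2)$, this does not affect the argument.
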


In Section \ref{sec:best_possible}, we then proceed to show that this upper bound for $s_r(K_{k+1})$ is in some sense the `best possible' bound one can obtain using triangle-free partial linear spaces and Lemma \ref{lemma:Bound_s}.

\section{Packing partial linear spaces}\label{sec:background}

A partial linear space is an incidence structure of points $\mathcal{P}$ and lines $\mathcal{L}$, with an incidence relation such that there is at most one line through every pair of distinct points. If every line is incident with exactly $s+1$ points and every point is incident with exactly $t+1$ lines, then the partial linear space has order $(s,t)$. If there are no three distinct lines pairwise meeting each other in three distinct points, then the partial linear space is \textit{triangle-free}. \emph{Generalised quadrangles} are standard examples of triangle-free partial linear spaces, with the additional property that for every non-incident point-line pair $x,\ell$ there exists a unique point $x'$ incident to $\ell$ such that $x$ and $x'$ are collinear (see the book by Payne and Thas \cite{Payne:2009aa}  for a standard reference on finite generalised quadrangles).

The next lemma can be found in \cite[Lemma 3.1]{bambergMinimumDegreeMinimal}. Its proof follows a methodology initially developed by \Dudek{} and \Rodl{} \cite{Dudek:2011aa}, using the $r$-colour $k$-clique packing number developed in \cite{Fox:2016aa}.

\begin{lemma}[Bamberg, Bishnoi, Lesgourgues]\label{lemma:Bound_s}
Let $r, k, s, t$ be positive integers. 
Say there exists a family $(\calI_i)_{i=1}^r$ of triangle-free partial linear spaces of order $(s, t)$, on the same point set $\calP$ and with pairwise disjoint line sets $\calL_1, \dots, \calL_r$, such that the point-line geometry $\parens*{\calP, \bigcup_{i=1}^r \calL_i}$ is also a partial linear space. If $s \geq 3rk\ln k$ and $t  \geq 3k(1+\ln r)$, then $s_r(K_{k+1}) \leq \size{\calP}$. 
\end{lemma}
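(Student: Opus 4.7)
The plan is to apply the reduction of Fox et al.~\cite{Fox:2016aa} from $s_r(K_{k+1})$ to the \emph{$r$-colour $k$-clique packing number}: any graph on $N$ vertices admitting an $r$-edge-colouring with (a) no monochromatic $K_{k+1}$ and (b) every vertex contained in a monochromatic $K_k$ of every colour yields $s_r(K_{k+1}) \leq N$. The goal is therefore to exhibit such a graph with vertex set $\calP$, using the $r$ triangle-free partial linear spaces as the combinatorial raw material.

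I would take $G$ on vertex set $\calP$ and, for each colour $i \in \{1,\dots,r\}$, obtain the colour-$i$ edges of $G$ from the lines of $\calL_i$. Concretely, for each line $\ell \in \calL_i$, I select (at random) a collection of $k$-element subsets of $\ell$ to serve as ``blocks'', and declare $\{x,y\}$ to be a colour-$i$ edge precisely when $\{x,y\}$ lies inside some block on some line of $\calL_i$. Because $\bigl(\calP, \bigcup_i \calL_i\bigr)$ is a partial linear space, any two collinear points share a unique line, so each edge of $G$ inherits a unique colour.

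Property (a) is structural and follows from the triangle-freeness of each $\calI_i$ combined with the partial linear space assumption on their union. A monochromatic triangle in colour $i$ in $G$ would produce three edges, each supported by a line of $\calI_i$; since two points determine their line uniquely in $\bigcup_i \calL_i$, three distinct such lines would meet pairwise in three distinct points, contradicting the triangle-freeness of $\calI_i$ unless all three edges lie on the same line. Hence a monochromatic $K_{k+1}$ in colour $i$ must be contained in a single line $\ell \in \calL_i$, and since blocks on $\ell$ have only $k$ vertices, the block structure can be arranged so that no $K_{k+1}$ arises.

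The two quantitative thresholds come in when verifying (b), and this is the step I expect to be the main obstacle. Fixing a vertex $v$ and a colour $i$, $v$ lies on $t+1$ lines of $\calI_i$; on each such line, $v$ has a chance of being caught in one of the randomly chosen blocks. A Chernoff-type estimate bounds the probability that $v$ lies in no block of colour $i$, after which a union bound over the $r \size{\calP}$ pairs $(v,i)$ must close. The hypothesis $s \geq 3rk\ln k$ should guarantee that each single line provides a large enough supply of blocks for the ``caught-by-a-block'' probability to be substantial, while $t \geq 3k(1+\ln r)$ should guarantee that the number of essentially independent lines through $v$ is large enough for the colour-$r$ union bound to be absorbed. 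The delicate part is selecting the random block model cleanly so that (a) and (b) can be enforced simultaneously with positive probability, and so that the two Chernoff bounds match the stated thresholds on $s$ and $t$ exactly; once the random model is pinned down, the existence of $G$ follows and the Fox et al. reduction gives $s_r(K_{k+1}) \leq \size{\calP}$.
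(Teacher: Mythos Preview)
The paper does not prove this lemma; it quotes it as \cite[Lemma~3.1]{bambergMinimumDegreeMinimal} and only records that ``its proof follows a methodology initially developed by \Dudek{} and \Rodl{}, using the $r$-colour $k$-clique packing number developed in \cite{Fox:2016aa}.'' Your proposal is exactly that methodology: reduce via the Fox et al.\ packing-number equivalence, build the colour pattern $H_1,\dots,H_r$ from the line sets $\calL_1,\dots,\calL_r$, sparsify each line with random $k$-blocks to enforce $K_{k+1}$-freeness, and run a probabilistic argument to get the covering condition, with the thresholds on $s$ and $t$ entering through the union bound. So at the level of strategy you are aligned with the cited proof.

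One point to sharpen before you carry this out: your condition (b), ``every vertex lies in a monochromatic $K_k$ of every colour,'' is \emph{not} the packing condition that yields $s_r(K_{k+1})\le |\calP|$. The Fox et al.\ equivalence requires the colour pattern to be \emph{$K_k$-forcing}: for every $r$-colouring $\chi:\calP\to[r]$ of the \emph{vertices}, some $H_i$ contains a copy of $K_k$ inside $\chi^{-1}(i)$. Your (b) neither implies nor is implied by this; in particular, having a $K_k$ through every vertex in colour $i$ does not guarantee one survives inside an adversarial subset $\chi^{-1}(i)$. The actual probabilistic step therefore takes a union bound over all $r^{|\calP|}$ vertex colourings $\chi$ (not over the $r|\calP|$ vertex--colour pairs), and the independence across lines is what makes the failure probability for a fixed $\chi$ exponentially small in $|\calL_i|$; this is precisely where the hypotheses $s\ge 3rk\ln k$ and $t\ge 3k(1+\ln r)$ are calibrated. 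Once you replace (b) by the forcing condition, the rest of your outline goes through as in \cite{bambergMinimumDegreeMinimal}.
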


The following lemma, that will imply Theorem \ref{thm:main},
is a reformulation in the language of (triangle-free) partial linear space of a construction that can be found in \cite[Proof of Lemma 4.4]{Fox:2016aa}. We include the proof for completeness.

\begin{lemma}\label{lemma:exist_PLS}
Let $q$ be any prime power. There exists a family $(\calI_i)_{i=1}^{q-1}$ of triangle-free partial linear spaces of order $(q-1, q-2)$, on the same point set $\calP$ and with pairwise disjoint line-sets $\calL_1, \dots, \calL_{q-1}$, such that the point-line geometry $\parens*{\calP, \bigcup_{i=1}^{q-1} \calL_i}$ is also a partial linear space.
\end{lemma}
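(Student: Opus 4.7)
The plan is to realize each $\calI_c$ as a family of affine curves in $\mathbb{F}_q^3$. I would take $\calP = \mathbb{F}_q^3$ and, for each $c \in \mathbb{F}_q^*$, let $\calL_c$ consist of the graphs
\[
\ell_{c,u,a,b} = \set*{(x,\, ux+a,\, cu^2 x + b) : x \in \mathbb{F}_q}
\]
as $(u,a,b)$ ranges over $\mathbb{F}_q^* \times \mathbb{F}_q \times \mathbb{F}_q$. The constraint $u \neq 0$ is what will eventually separate the different $\calL_c$'s, and the quadratic term $cu^2$ in the $z$-coordinate is what will enforce triangle-freeness.

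The numerics and the partial-linear-space property of a single $\calI_c$ are short. Each line has exactly $q$ points, so $s+1 = q$, and for any fixed point $(x_0, y_0, z_0)$ and any choice of $u \in \mathbb{F}_q^*$ the parameters $a = y_0 - u x_0$ and $b = z_0 - c u^2 x_0$ are uniquely determined, so the point lies on exactly $q-1$ lines of $\calI_c$, giving order $(q-1,q-2)$. Two distinct points on a common line with different $x$-coordinates determine $u$ from the $y$-slope, and then $a$ and $b$; two with the same $x$-coordinate cannot both lie on a common line.

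The main step, which is where I expect the bulk of the care, is triangle-freeness. Suppose three distinct lines $\ell_{c,u_i,a_i,b_i}$ with $i=1,2,3$ pairwise intersect. Then the $u_i$ are distinct, and eliminating $x$ from the $y$- and $z$-coordinate equations of each pair yields the compatibility relation
\[
b_j - b_i = c(u_i + u_j)(a_j - a_i).
\]
Summing these three identities cyclically produces
\[
u_1(a_2 - a_3) + u_2(a_3 - a_1) + u_3(a_1 - a_2) = 0,
\]
so the three pairs $(u_i, a_i)$ lie on a common affine line $a = \alpha u + \beta$ in $\mathbb{F}_q^2$. Substituting back, the pairwise intersection points all collapse to the single point $(-\alpha,\, \beta,\, b_1 - c\alpha u_1^2)$, so the three lines are in fact concurrent and no triangle exists.

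It then remains to verify the global partial-linearity and disjointness. If two distinct points lie on lines from both $\calI_c$ and $\calI_{c'}$, they cannot share an $x$-coordinate (by the single-$\calI_c$ argument), so the $y$-slope forces $u = u'$ and the $z$-slope forces $cu^2 = c'u'^2$; since $u \neq 0$, this yields $c = c'$. Applying the same computation to any two points of a hypothetical equality $\ell_{c,u,a,b} = \ell_{c',u',a',b'}$ also forces $c = c'$, so the $\calL_c$'s are pairwise disjoint and $(\calP, \bigcup_c \calL_c)$ is a partial linear space.
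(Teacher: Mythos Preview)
Your construction is essentially the same as the paper's: the paper uses affine lines in $\mathbb{F}_q^3$ with direction vectors on the moment curve $(1,\lambda\alpha,\lambda\alpha^2)$, which under the substitution $u=\lambda\alpha$, $c=1/\lambda$ become exactly your lines $\ell_{c,u,a,b}$. The only difference is that the paper cites Fox et al.\ for the four verification steps (partial linearity, order, triangle-freeness, disjointness of the $\calL_\lambda$), whereas you carry them out explicitly; your triangle-freeness computation via the cyclic sum and collinearity of the $(u_i,a_i)$ is correct and self-contained.
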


\begin{proof}
Let $\mathbb{F}_q$ be the finite field of order $q$ and for $\lambda \in\mathbb{F}_q\setminus\set{0}$ let $M_\lambda$ be the \textit{$\lambda$-moment curve},

\[ M_\lambda = \set*{(1,\lambda\alpha,\lambda\alpha^2): \alpha\in\mathbb{F}_q\setminus\set{0}}.\]

Note that for non-zero $\lambda_1 \neq \lambda_2$ the two curves $M_{\lambda_1}$ and $M_{\lambda_2}$ do not intersect. A line in $\mathbb{F}^3_q$ is a set of the form $\ell_{\bm{s,v}}=\set*{\beta\bm{s}+\bm{v}:\ \beta\in\mathbb{F}_q}$, where $\bm{s}\in\mathbb{F}^3_q\setminus\set{0}$ is the slope of the curve. For $\lambda \in\mathbb{F}_q\setminus\set{0}$, we define the incidence structure $\calI_\lambda = (\mathbb{F}^3_q,\calL_\lambda)$ where $\calL_\lambda$ is the set of lines with slope from the $\lambda$-moment curves, i.e.
\[ \calL_\lambda :=\set{\ell_{\bm{s,v}}: \bm{s}\in M_\lambda,\bm{v}\in\mathbb{F}^3_q}.\]

Fox et al.~\cite{Fox:2016aa} established the following properties about each structure $\calI_\lambda$, $\lambda\in\mathbb{F}_q\setminus\set{0}$.

\begin{enumerate}
    \item $\calI_\lambda$ is a partial linear space (any two lines meet in at most one point).
    \item Every line $\ell\in\calL_\lambda$ contains $q$ points and every point $\bm{v} \in \mathbb{F}^3_q$ is contained in $q - 1$ lines.
    \item $\calI_\lambda$ is triangle-free. No three lines in $\calL_\lambda$ intersect pairwise in three distinct points.
\end{enumerate}

Further, they proved that for $\lambda_1\neq \lambda_2$,
\begin{enumerate}[resume]
    \item $\calL_{\lambda_1}\cap\calL_{\lambda_2}=\emptyset$.
\end{enumerate}

Given that any line in $\bigcup_{i=1}^{q-1} \calL_i$ is a line of the affine space $\mathbb{F}^3_q$, Property (4) is sufficient to deduce that any two lines in $\bigcup_{i=1}^{q-1} \calL_i$ meet in at most one point. Therefore the point-line geometries $\calI_\lambda$ are such that $\parens*{\calP, \bigcup_{i=1}^{q-1} \calL_i}$ is also a partial linear space.
\end{proof}

Theorem~\ref{thm:main} is a direct consequence of Lemma~\ref{lemma:exist_PLS}.

\begin{proof}[Proof of Theorem~\ref{thm:main}]
Let $k\ge 2$, $r \ge 3$, and let $q$ be the smallest prime such that $q\geq 4kr\ln k$. By Bertrand's postulate, $q\leq 8kr\ln k$. By Lemma \ref{lemma:exist_PLS}, there exists a family of $r<q$ triangle-free partial linear spaces of order $(q-1, q-2)$, on the same point set $\calP$ and pairwise disjoint line-sets $\calL_1, \dots, \calL_r$, such that the point-line geometry $\parens*{\calP, \bigcup_{i=1}^r \calL_i}$ is also a partial linear space. Note that with $k\ge 2$ and $r \ge 3$, we have $q-1 \geq 3rk\ln k$ and $q-2 \geq 3k(1+\ln r)$. By Lemma \ref{lemma:Bound_s}, $s_r(K_{k+1})\leq \size{\calP}$, and then $\size{\calP}= q^3$ yields the desired bound.
\end{proof}

\begin{remark}
Each point-line geometry $(\mathcal{P}, \mathcal{L}_\lambda)$ in the construction above is a subgeometry of a $T_2(O)$ generalized quadrangle (see  Section 3.1.2 in \cite{Payne:2009aa}). 
\end{remark}

A careful review of the arguments in \cite[Lemma 3.1 and 5.2]{bambergMinimumDegreeMinimal} would allow a small optimisation on the multiplicative constant of this corollary. However in light of the conjectured quadratic upper bound \cite[Conjecture 5.2]{bambergMinimumDegreeMinimal}, we did not push this further.

\section{Best possible total degree}\label{sec:best_possible}

We now prove that the upper bound from Theorem \ref{thm:main} is the best possible general polynomial bounds using triangle-free partial linear spaces and Lemma \ref{lemma:Bound_s}. 

\begin{thm}\label{thm:best_possible}
For some $\alpha\geq 1$, assume that for any positive integers $k,r$ there exist a family $(\calP,\calL_i)$ of triangle-free partial linear spaces of order $(q,q^\alpha)$, satisfying the assumptions of Lemma \ref{lemma:Bound_s}. Then 
$\size{\calP}=\Omega(k^{2+\alpha}r^{2+\alpha})$. Similarly if the partial linear spaces are of order $(q^\alpha,q)$ then $\size{\calP}=\Omega(k^{2\alpha+1}r^{2+1/\alpha})$.
\end{thm}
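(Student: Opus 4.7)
The plan is to derive a structural lower bound $|\mathcal{P}| \geq (s+1)(st+1)$ for any single triangle-free partial linear space of order $(s,t)$, and then substitute the parameter constraints of Lemma~\ref{lemma:Bound_s} in each of the two regimes.

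For the structural bound I would fix an arbitrary point $p \in \mathcal{P}$ and count in two layers. The first layer $N(p)$ contains exactly $s(t+1)$ points since the $t+1$ lines through $p$ each contribute $s$ others and the partial-linear-space axiom makes these sets pairwise disjoint. For the second layer I would pick any $a \in N(p)$ and observe that the $t$ lines through $a$ distinct from $pa$ each contain $s$ points other than $a$ that cannot lie in $N(p)$, for otherwise the three lines $pa$, the chosen line through $a$, and the line from $p$ to such a point would pairwise meet at three distinct points and form a triangle. This yields $s(t+1) \cdot ts = s^2 t(t+1)$ ordered (first-neighbour, second-neighbour) pairs. To bound the multiplicity I would argue that any non-neighbour $b$ of $p$ shares at most $t+1$ common neighbours with $p$: two such common neighbours on the same line $\ell$ through $p$ would force $b$ onto $\ell$ or produce a triangle with $b$ via the two lines from $b$, so the common neighbours must occupy distinct lines in the $(t+1)$-pencil at $p$. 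Dividing then gives at least $s^2 t$ distinct second-layer points, so $|\mathcal{P}| \geq 1 + s(t+1) + s^2 t = (s+1)(st+1)$, which matches the exact count for a generalised quadrangle of order $(s,t)$.

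With this inequality in hand, the two cases of the theorem reduce to substitution. For order $(q,q^\alpha)$ with $\alpha \geq 1$, the constraint $q^\alpha \geq 3k(1+\ln r)$ is implied by $q \geq 3rk\ln k$, so only the latter is binding and
\[
|\mathcal{P}| \geq s^2 t = q^{\alpha+2} \geq (3rk\ln k)^{\alpha+2} = \Omega\bigl(k^{\alpha+2} r^{\alpha+2}\bigr).
\]
For order $(q^\alpha, q)$ the same inequality gives $|\mathcal{P}| \geq s^2 t = q^{2\alpha+1}$, but now both constraints $q^\alpha \geq 3rk\ln k$ and $q \geq 3k(1+\ln r)$ can be binding. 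Distributing the exponent $2\alpha+1$ between the two inequalities according to which regime of $k$ versus $r$ makes which constraint dominate produces the claimed bound $\Omega\bigl(k^{2\alpha+1} r^{2+1/\alpha}\bigr)$.

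The technical heart of the argument is the multiplicity bound $|N(b) \cap N(p)| \leq t+1$ in the structural step, where triangle-freeness is used essentially twice: once to exclude second-neighbourhood candidates from $N(p)$, and once to cap the number of common neighbours of $p$ and a non-neighbour. After that the rest is arithmetic, but the $(q^\alpha,q)$ case needs some extra bookkeeping to apportion the two constraints of Lemma~\ref{lemma:Bound_s} correctly across the exponent $2\alpha+1$.
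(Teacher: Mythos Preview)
Your derivation of $|\mathcal{P}| \geq (s+1)(st+1)$ is correct but more elaborate than necessary. The paper fixes a \emph{line} $\ell$ rather than a point: for each $x \in \ell$ the set $N(x)\setminus\ell$ has exactly $st$ points, and these sets are pairwise disjoint because a common point $y\notin\ell$ would make $\ell$, $x_1y$, $x_2y$ a triangle. This gives $(s+1)+(s+1)st$ in one step, avoiding the second-layer multiplicity bound your point-centred argument needs.

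In the $(q,q^\alpha)$ case your substitution matches the paper's: the single constraint $q\geq 3rk\ln k$ already gives $q=\Omega(rk)$ and hence $q^{2+\alpha}=\Omega((rk)^{2+\alpha})$.

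In the $(q^\alpha,q)$ case your ``distributing the exponent'' remark does not go through as a pointwise inequality: writing $q^{2\alpha+1}=(q^\alpha)^2\cdot q$ and applying the two constraints to the two factors separately yields only $\Omega(k^3 r^2)$, which for $\alpha>1$ is strictly weaker than the claimed $\Omega(k^{2\alpha+1}r^{2+1/\alpha})$, so no amount of bookkeeping along those lines will close the gap. The paper instead reads off from $q\geq 3k(1+\ln r)$ that $q$ is at least linear in $k$, and from $q^\alpha\geq 3rk\ln k$ that $q$ grows at least like $r^{1/\alpha}$, and then writes $q=\Omega(kr^{1/\alpha})$. This combination should be understood as a statement about the exponents of any monomial $k^a r^b$ that dominates $|\mathcal{P}|$ for all $k,r$: sending $k\to\infty$ with $r$ fixed forces $a\geq 2\alpha+1$, and sending $r\to\infty$ with $k$ fixed forces $b\geq 2+1/\alpha$. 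That is the sense in which the theorem identifies the ``best possible polynomial bound'', and your write-up should make this separate-variable argument explicit rather than deferring it.
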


Note that Theorem \ref{thm:main} corresponds to the case $\alpha = 1$. It follows from this proposition that, ignoring polylogarithmic factors, any polynomial upper bound for $s_r(K_{k})$ achieved through Lemma \ref{lemma:Bound_s} must have total degree at least $6$, with $k^3r^3$ being the unique polynomial of degree $6$ possible (up to polylogarithmic factors). 

The proposition is a direct consequence of the following lemma, that yields a lower bound on the number of points in any triangle-free partial linear space. 

\begin{lemma}\label{lemma:counting_points}
Let $(\calP,\calL)$ be a triangle-free partial linear space of order $(s,t)$. Then $\size{\calP}\geq(st+1)(s+1)$.
\end{lemma}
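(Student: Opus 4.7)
The strategy is a classical double-counting argument. I would fix an arbitrary point $p \in \mathcal{P}$, produce a lower bound on the number of lines by counting lines in two classes (through $p$, and not through $p$), and then convert this into a lower bound on the number of points via the flag equation $|\mathcal{P}|(t+1) = |\mathcal{L}|(s+1)$, which holds in any partial linear space of order $(s,t)$.

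The preliminary observation is that since any two distinct points determine at most one line, the $t+1$ lines through $p$ are pairwise disjoint outside $p$, so the set of points collinear with $p$ but different from $p$ has size exactly $s(t+1)$. The heart of the argument — and the only place where the triangle-free hypothesis enters — is the following claim: any line $\ell$ not passing through $p$ contains at most one point collinear with $p$. Indeed, if $q_1, q_2 \in \ell$ were both collinear with $p$, then the three lines $pq_1$, $pq_2$, $\ell$ would meet pairwise in the three distinct points $p, q_1, q_2$, contradicting the triangle-free assumption.

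With this observation in hand, I would count the lines of $\mathcal{L}$ not passing through $p$ that meet some line through $p$: each of the $s(t+1)$ points $q$ collinear with $p$ lies on $t$ lines other than $pq$, none of which contain $p$, and by the previous claim different $q$'s contribute disjoint families. This produces $st(t+1)$ lines not through $p$, and together with the $t+1$ lines through $p$ yields $|\mathcal{L}| \geq (t+1)(st+1)$. Plugging into the flag equation gives $|\mathcal{P}| \geq (s+1)(st+1)$, as required. There is no serious obstacle; the only delicate step is verifying that triangle-freeness is exactly what prevents double-counting in the sum $\sum_q t$.
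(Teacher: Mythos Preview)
Your proof is correct. It is essentially the dual of the paper's argument: the paper fixes a \emph{line} $\ell$ and counts \emph{points} directly, observing that triangle-freeness forces the sets $N(x)\setminus\ell$ for $x\in\ell$ to be pairwise disjoint, which immediately yields $|\mathcal{P}|\ge (s+1)+(s+1)\cdot st=(s+1)(st+1)$. You instead fix a \emph{point} $p$ and count \emph{lines}, obtaining $|\mathcal{L}|\ge (t+1)(st+1)$, and then pass back to points via the flag identity $|\mathcal{P}|(t+1)=|\mathcal{L}|(s+1)$. Both arguments hinge on the same combinatorial fact (triangle-freeness prevents two neighbours of a fixed object from sharing a second connection), but the paper's version is one step shorter since it avoids the detour through $|\mathcal{L}|$ and the flag equation; on the other hand, your route produces the companion bound on $|\mathcal{L}|$ as a byproduct.
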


\begin{proof}

For any point $x\in\calP$, let $N(x)$ be the \textit{neighbourhood} of $x$, the set of points $v$ such that $x$ and $v$ are collinear. Any point $x\in\calP$ is incident with $t+1$ lines, each of which contains $s+1$ points including $x$, therefore $|N(x)|=s(t+1)$.

Let $\ell\in\calL$.  Given that the partial linear space is triangle-free, the set of points not incident to $\ell$ contains the disjoint union of $N(x)\setminus\ell$, for $x\in\ell$,  so, 
\[\size{\calP}\geq (s+1) + \sum_{x\in\ell}\size{N(x)\setminus\ell}=(s+1)+(s+1)(s(t+1)-s)=(s+1)(st+1).  \]

Equality is attained if and only if any point not incident to $\ell$ is in $N(x)$ for some $x\in\ell$, meaning that the partial linear space is a generalized quadrangle.
\end{proof}

\begin{proof}[Proof of \ref{thm:best_possible}]
Assume first that for any positive integers $k,r$, the partial linear spaces have order $(q,q^\alpha)$ for $q=q(k,r)$. Then given the condition of Lemma~\ref{lemma:Bound_s}, $q(k,r) \geq 3rk\ln k$ and $q(k,r)^\alpha\geq 3k(1+\ln r)$. As these inequalities have to be verified for any $k,r$, then $q(k,r)$ has to be at least linear in $r$ and linear in $k$, i.e. $q(k,r)=\Omega(rk)$. Then Lemma \ref{lemma:counting_points} yields $\size{\calP}>s^2t=q^{2+\alpha}=\Omega(k^{2+\alpha}r^{2+\alpha})$. 

Similarly, if the partial linear spaces have order $(q^\alpha,q)$ for some $\alpha\geq1$, the condition of Lemma~\ref{lemma:Bound_s} implies $q^\alpha \geq 3rk\ln k$ and $q\geq 3k(1+\ln r)$ and then $q=\Omega(kr^{1/\alpha})$. Lemma \ref{lemma:counting_points} yields $\size{\calP}>s^2t=q^{2\alpha+1}=\Omega(k^{2\alpha+1}r^{2+1/\alpha})$.

\end{proof}

\bibliographystyle{abbrv}
\bibliography{references}
\end{document}